\def\namedlabel#1#2{\begingroup
    #2%
    \def\@currentlabel{#2}%
    \phantomsection\label{#1}\endgroup
}
\numberwithin{equation}{section}
\def\argmax{{\rm{arg}}\max}
\def\Vl{V^{lim}}
\def\half{\frac{1}{2}}
\def\G{{\mathcal G}}
\def\P{{\mathcal P}}
\def\tC{{\mathbf{C}}}
\def\C{\tC}
\def\L{\buildrel L^1\over \longrightarrow}
\def\nto{\buildrel {n\rightarrow \infty}\over \longrightarrow}
\def\calL{{\mathcal L}}
\def\F{{\mathcal F}}
\def\P{{\mathbb P}}
\def\R{{\mathbb R}}
\def\A{{\mathcal A}}
\def\E{{\mathbb E}}
\def\N{{\mathbb N}}
\def\D{{\mathbf D}}
\newcommand{\sgn}{\operatorname{sgn}}
\def\ed{ \stackrel{d}{=} }
\theoremstyle{plain}
\newtheorem{thm}{Theorem}
\newtheorem{lem}[thm]{Lemma}
\newtheorem{proposition}[thm]{Proposition}
\theoremstyle{remark}
\newtheorem{remark}{Remark}
\theoremstyle{definition}
\newtheorem{defin}{Definition}
\title[PIA in continuous time]{On the policy improvement algorithm in continuous time}
\author{Saul D.\ Jacka}
\address{Department of Statistics, University of Warwick, UK}
\email{s.d.jacka@warwick.ac.uk}
\author{Aleksandar Mijatovi\'c}
\address{Department of Mathematics, Imperial College London, UK}
\email{a.mijatovic@imperial.ac.uk}
\begin{document}

\begin{abstract}
We develop a general approach to the Policy Improvement Algorithm (PIA) for stochastic control problems for continuous-time processes. 
The main results assume only that the controls lie in a compact metric space and give general sufficient conditions for
the PIA to be well-defined and converge in continuous time (i.e. without time discretisation). It emerges that the natural context 
for the PIA in continuous time is weak stochastic control. We give examples of control problems demonstrating
the need for the weak formulation as well as diffusion-based classes of problems where the PIA in continuous time is applicable. 
\end{abstract}

\keywords{Stochastic control in continuous time, policy improvement algorithm, general state space, general controls.}

\subjclass[2010]{60J60}

\maketitle

\section{Introduction}
The \textbf{Policy Improvement Algorithm (PIA)} has played a central role in control and optimisation for 
over half a century, see e.g. Howard's monograph~\cite{Howard}. The PIA yields an intuitive constructive approach
to optimal control by generating a sequence of policies $(\pi_n)_{n\in\N}$ whose payoffs $(V^{\pi_n})_{n\in\N}$
are improved at every step. Put differently, the payoffs 
$(V^{\pi_n})_{n\in\N}$ form a sequence of functions on the state space converging monotonically 
to the value function of the problem (see e.g. Section~\ref{sec:PIA} below for the precise definition).
In the stochastic setting, the PIA is perhaps 
most widely applied in the theory of Markov decision processes, see e.g.~\cite{Hordi, Meyn, Santos} and the 
references therein. Most of the literature on the PIA makes assumptions either on the process (e.g. finite/countable
state space or discrete time) or on the set of available controls (e.g. a finite set~\cite{Doshi}). 
In contrast, the present paper presents an abstract approach to the PIA in continuous time, allowing for 
an uncountable set of controls.  

The main aim of this work is two-fold: (1) define a general weak formulation for optimal control problems in continuous
time, without restricting the set of available controls, and (2) develop an abstract framework 
for in this setting for the PIA to work. The latter task involves stating a general set of assumptions
(see~\ref{As:A1}--\ref{As:A8} in Section~\ref{sec:PIA} below), under which the sequence of policies
$(\pi_n)_{n\in\N}$
can be constructed, prove that the PIA yields an increasing sequence of payoffs $(V^{\pi_n})_{n\in\N}$
(see Theorem~\ref{thm:1} below),
which converges to the value function of the stochastic control problem 
(see Theorem~\ref{thm:2} below),
\text{and} 
prove that a subsequence of policies 
$(\pi_n)_{n\in\N}$
converges uniformly on compacts
to an optimal policy 
$\pi^*$
with 
the payoff 
$V^{\pi^*}$
equal to the value function
(see Theorem~\ref{thm:3} below). In particular, our results imply that 
under general assumptions~\ref{As:A1}--\ref{As:A8},
an optimal policy
$\pi^*$
exists. 

The present paper presents a unified language for stating and solving general stochastic control problems in continuous time, 
which can in particular be used to describe simultaneously our recent 
results on the PIA for diffusions over the infinite~\cite{JMS_1} and finite~\cite{JMS_2} time horizons. 
The key distinction between this work and~\cite{JMS_1, JMS_2} lies in the fact that here we 
assume that the payoff
$V^{\pi_n}$
is sufficiently regular
for every policy $\pi_n$
produced by the PIA, which appears to be necessary for the algorithm to converge.
In contrast, in~\cite{JMS_1} (resp.~\cite{JMS_2}) we prove that this assumption is satisfied in the context 
of control problems for continuous-time diffusion processes over an infinite (resp. finite) time horizon. 

The remainder of the paper is organised as follows: Section~\ref{sec:Setting_and_examples}
gives the general weak formulation of the control problem and presents examples demonstrating
the necessity of the weak formulation. Section~\ref{sec:PIA} describes the PIA and states our main
results. Section~\ref{sec:Exmaples_diffusions} presents examples of the PIA in the context of diffusion
processes, based on~\cite{JMS_1, JMS_2}. The proofs of the results are given in Section~\ref{sec:Proofs}.

\section{The general problem: setting and examples}
\label{sec:Setting_and_examples}
\subsection{Setting}
\label{subsec:Setting}
Consider the following weak formulation of a general
\textit{optimal control problem}.
Given 
continuous functions 
$f:S\times A\to\R_+$
and 
$g:S\to\R_+$,
find for each $x\in S$
\begin{equation}
\label{eq:Value_Function}
V(x):=\sup_{\Pi\in \A_x}\E\left[\int_0^\tau
f(X^\Pi_t,
\Pi_t)dt+g(X^\Pi_\tau)1_{(\tau<\infty)}\right]
\end{equation}
where 
\begin{enumerate}[label=(\arabic*)]
\item the control process
$\Pi$,
defined on some filtered probability space $(\Omega,(\F_t)_{t\in \R_+},\F,\P)$, 
takes values in a
 compact metric space $A$ and is $(\F_t)$-adapted.
The topological space $S$
is the state space of the controlled process 
and $D$ is a domain (i.e. an open and connected subset) in $S$, such that
$D=\cup_{n=1}^\infty K_n$, where $\{K_n\}$ are an increasing family of compact sets 
in $S$ with $K_n$ contained in the interior of $K_{n+1}$ for all $n\in\N$; 
\label{item:def_strategy}
\item for each $a\in A$ we assume that 
$X^a$ is a strong Markov process with state space $S$ and a given 
(martingale) infinitesimal generator $\calL^a$ and
domain $\D^a$. Furthermore, we assume that there
exists a nonempty subset $\C$ 
of
$\cap_{a\in A}\D^a$ 
with the property that the map
$(x,a)\mapsto \calL^a \phi(x)$ is jointly continuous
on
$ D\times A$ for each $\phi\in \C$;
\label{item_2:Setting}
\item $\A_x$ consists of all control processes
$\Pi$ such that there exists an 
$(\F_t)$-adapted, right-continuous $S$-valued process $X^\Pi$ satisfying
\begin{enumerate}[label=(\roman*)]
\item $X^\Pi_0=x$;
\item the law of $(X^\Pi,\Pi)$ is unique;
\item for each $\phi\in \C$, 
\begin{equation}\label{gen}
\phi(X^\Pi_{t\wedge \tau})-\int_0^{t\wedge \tau}\calL^{\Pi_s}\phi(X^\Pi_s) ds\quad\text{ is a martingale,}
\end{equation}
where the stopping time  $\tau$ is the first exit
time of 
$X^\Pi$ 
from $D$; 
\item defining $J$ by
$$
J(x,\Pi):=\int_0^\tau f(X^\Pi_t, \Pi_t)dt+g(X^\Pi_\tau)1_{(\tau<\infty)},
$$ 
we have
$$
\int_0^{t\wedge \tau} f(X^\Pi_s, \Pi_s)ds+g(X^\Pi_\tau)1_{(\tau<\infty)}\L J(x,\Pi)
\qquad\text{as $t\to\infty$.}
$$
\end{enumerate}
We refer to the elements of $\A_x$ as controls.
\end{enumerate}

\begin{remark}
\label{rem:controls}
The stochastic basis, 
i.e. the filtered probability space 
$(\Omega,(\F_t)_{t\in \R_+},\F,\P)$, 
in the definition of the control process $\Pi$
in~\ref{item:def_strategy} above
may depend on 
$\Pi$.
In particular, the expectation in~\eqref{eq:Value_Function} corresponds to the probability
measure 
$\P$
under which the control $\Pi$
is defined. In the weak formulation, we are not required to fix a filtered probability 
space in advance but instead allow the control, together with its corresponding controlled process, to be 
defined on distinct stochastic bases for different controls. 
\end{remark}

We now recall the definition of a key class of controls, 
namely Markov policies.
A {\em Markov policy}
$\pi$ is a function 
$\pi:\; S\rightarrow A$
such that for each $x\in D$ there exists an adapted process $X$ 
on a stochastic basis satisfying
\begin{enumerate}[label=(\roman*)]
\item $X_0=x$;
\item $\Pi=\pi(X)$, defined by $\Pi_t:=\pi(X_t)$ for $t\geq0$, is in $\A_x$;
\label{enum:item_2_unique_in_law}
\item the processes $(X^\Pi,\Pi)$ and $(X, \pi(X))$ have the same law.
\label{enum:item_3_unique_in_law}
\end{enumerate}
Hereafter we denote such an $X$ by $X^\pi$.
Note that~\ref{enum:item_2_unique_in_law}
in the definition of a Markov policy
implies the existence of the process 
$X^\Pi$
and the uniqueness of the law of 
$(X^\Pi,\Pi)$.
Part~\ref{enum:item_3_unique_in_law} 
stipulates that the law of $(X,\pi(X))$
coincides with it.

\begin{remark}
As mentioned in Remark~\ref{rem:controls} above, our formulation of the control problem in~\eqref{eq:Value_Function}
does not make a reference to a particular filtered probability space. This 
allows us to consider the Markov control $\pi=\sgn$, see e.g. examples~\ref{item:Tanaka_control} and~\ref{item:Tanaka_control2} in 
Section~\ref{subsubsec:Weak} below. 
It is well known that the SDE
in~\eqref{eq:Tanaka_Control} (with $a=\sgn(X)$), arising in these examples, does not possess a strong solution,
and hence a strong formulation of the stochastic control problem would have to exclude such natural 
Markov controls. Furthermore, these examples show that such controls arise as the optimal controls in certain problems. 
\end{remark}

Given $x\in S$ and a policy $\Pi\in\A_x$ 
(resp. a Markov policy $\pi$),
we define the \textit{payoff} to be
\begin{equation}
\label{eq:VPi}
V^\Pi(x):=\E[J(x,\Pi)] \qquad\text{(resp. $V^\pi(x):=\E[J(x,\pi(X^\pi)]$)}.
\end{equation}
Hence the value function $V$, defined in~\eqref{eq:Value_Function}, can be expressed 
in terms of the payoffs 
$V^\Pi$
as 
\begin{equation}
\label{eq:Vx}
V(x):=\sup_{\Pi\in \A_x}V^\Pi(x)\qquad \text{for any $x\in S$.}
\end{equation}

\subsection{Examples}
There are numerous specific stochastic control problems 
that lie within the setting described
in Section~\ref{subsec:Setting}. We mention two
classes of examples. 
\subsubsection{Discounted infinite horizon problem.}
\label{subsubsec:Inf_hor}
Let $X^a$ be a killed Markov process with
$S=D\cup \{\partial\}$ with $\partial$ an
isolated cemetery state. Killing to $\partial$
occurs at a (possibly state and control-dependent) rate $\alpha$ 
and $\tau$ is the death time of the process.  
A special case of the controlled (killed) It\^o diffusion process
will be described in Section~\ref{subsec:discounted_controlled_diffusion}.
The detailed proofs that the Policy Improvement Algorithm from Section~\ref{sec:PIA} below 
can be applied in this case are given in~\cite{JMS_1}.
\begin{remark}
The  general setting allows us to consider more general problems where $\tau$ is the earlier of the killing time and exit from a domain. 
As is usual, we may also assume that the killing time is unobserved so that,
conditioning on the sample path and control we revise
problem~\eqref{eq:Value_Function} to the standard killed version, where 
$V^\Pi(x)$ and $V(x)$ are given in~\eqref{eq:VPi} and~\eqref{eq:Vx}, respectively, with 
$$J(x,\Pi):=\int_0^\tau
\exp\left(-\int_0^t\alpha(X^\Pi_s,
\Pi_s)ds\right)f(X^\Pi_t,
\Pi_t)dt+\exp\left(-\int_0^\tau\alpha(X^\Pi_s,
\Pi_s)ds\right)g(X^\Pi_\tau)1_{(\tau<\infty)}.
$$
\end{remark}
\subsubsection{The finite horizon problem.}
\label{subsubsec:Finf_hor}
Let $Y^a$ be a Markov process on a topological
space $S'$ with
infinitesimal generator $\G^a$ and $\tau$ the time
to the horizon $T$. Define $S:=S'\times \R$ and
$D:=S'\times \R_+$, so if $x=(y,T)$ then
$X^a_t=(Y^a_t,T-t)$, $\tau=T$ and
$\calL^a=\G^a-\frac{\partial}{\partial t}$. 
The detiled proofs that the PIA in Section~\ref{sec:PIA} below 
works in this setting are given in~\cite{JMS_2}.

\subsubsection{The weak formulation of the control problem is essential}
\label{subsubsec:Weak}
In this example we demonstrate that it is necessary to formulate 
the stochastic control setting in Section~\ref{subsec:Setting}
using the weak formulation in order not to exclude natural examples of the 
control problems. 

\begin{enumerate}[label=(\Roman*)]
\item In our formulation it is possible for two controls $\Pi$
\label{item:Tanaka_control}
and $\Sigma$
to have the same law but the pairs $(X^\Pi,\Pi)$
and
$(X^\Sigma,\Sigma)$ not to. Consider 
$S:=\R$,
$A:=\{-1,1\}$
and, for $a\in A$, 
the strong Markov process
$X^a$
is given by
\begin{equation}
\label{eq:Tanaka_Control}
d X^a_t = a\> dV_t,
\end{equation}
where 
$V$
is any Brownian motion. 
Let 
$W$
be a fixed Brownian motion on a stochastic basis.
Define 
$\Pi:=\sgn(W)$
(with 
$\sgn(0):=1$)
and 
in~\eqref{eq:Tanaka_Control} 
let $V$ be defined by the stochastic integral
$V_t:=\int_0^t\sgn(W_s)dW_s$.
Then $X^\Pi=W$
and
hence
$(X^\Pi,\Pi)=(W,\sgn(W))$.
Take
$\Sigma:=\sgn(W)$
and in~\eqref{eq:Tanaka_Control}
let
$V:=W$.
Then, by the Tanaka formula, we have
$$X^\Sigma_t =\int_0^t \sgn(W_s)\>dW_s= |W_t|-L_t^0(W),$$ 
where $L^0(W)$
is the local time of $W$ at zero. 
It is clear that 
$X^\Sigma$
is a Brownian motion
and hence 
$X^\Pi\ed X^\Sigma$
and
$\Pi\ed \Sigma$.
However the random vectors
$(X^\Pi,\Pi)$
and
$(X^\Sigma,\Sigma)$
have distinct joint laws,
e.g. 
$\P(X^\Pi_t>0,\Pi_t=-1)=0<\P(X^\Sigma_t>0,\Sigma_t=-1)$
for any 
$t>0$.

In order to show that such strategies can arise as optimal strategies,
consider (in the context of Section~\ref{subsubsec:Inf_hor})
the controlled process in~\eqref{eq:Tanaka_Control}
with 
$D:=(-1,1)$
and
$$
J(x,\Pi):=\exp\left(-\int_0^\tau \alpha(X^\Pi_t,\Pi_t)dt\right)\cdot g(X^\Pi_\tau),\quad\text{where} \quad
\alpha(x,a) := \begin{cases}
2+a, & x\in D, \\
\infty, & x\notin D,
\end{cases}$$
$\tau$ is the first exit of $X^\Pi$ from the interval $(-1,1)$ and 
$g:\{-1,1\}\to\R$ is given by
$$
g(1):=-\sinh(\sqrt{6}),\qquad g(-1):=\sqrt{3}\sinh(\sqrt{2}).
$$
Define the function 
$\widehat V:S\to\R$
by
$$
\widehat V(x) := \begin{cases}
-\sinh(\sqrt{6}x), & x\geq0 , \\
-\sqrt{3}\sinh(\sqrt{2}x),
 & x<0,
\end{cases}
$$
and note that 
$\widehat V$
is 
$C^1$,
piecewise
$C^2$
and the following equalities hold 
for all $x\in D\setminus\{0\}$:
$$\sgn(\widehat V(x))=-\sgn(x)
\quad\text{and}\quad
\widehat V''(x) = (2+4 \cdot 1_{\{x>0\}})\>\widehat V(x).
$$
Hence the following HJB equation holds (recall $a\in A=\{-1,1\}$ and thus $a^2=1$):
$$
\sup_{a\in A} \left[\frac{a^2}{2}\widehat V''-(a+2)\widehat V\right]=0,
\qquad
\text{with boundary condition $\widehat V|_{\partial D}=g$,}
$$
and the supremum is attained at 
$a=\sgn(x)$. 
Now, a standard application of martingale theory and stochastic calculus implies that the Markov policy
$\pi(x):=\sgn(x)$
is optimal 
for problem~\eqref{eq:Value_Function}
(with the controlled process given in~\eqref{eq:Tanaka_Control})
and its payoff 
$V^\pi$
equals 
the value function 
$V$
in~\eqref{eq:Value_Function}:
$
V(x)=V^\pi(x)=\widehat V(x)
$
for all $x\in D$.

\item It may appear at first glance that 
the weak formulation of the solution only played a role in Example~\ref{item:Tanaka_control}
due to the fact that the space of controls 
in~\ref{item:Tanaka_control}
was restricted to
$A=\{-1,1\}$.
Indeed, it holds that if in Example~\ref{item:Tanaka_control} we allow controls in the interval
$[-1,1]$,
then the Markov control 
$\pi(x)=\sgn(x)$
is no longer optimal
(as the HJB equation is no longer satisfied). 
However, 
the weak formulation of the control problem is essential even if we allow 
the controller to choose from an
uncountable set of actions at each moment in time.
We now illustrate this point by describe an example where the Markov control 
$\pi(x)=\sgn(x)$
is optimal, while the controls take values in the closed interval.

Consider 
the controlled process 
$X^a$
in~\eqref{eq:Tanaka_Control}
with 
$S$
and
$D$
as in 
Example~\ref{item:Tanaka_control}.
Let
$A:=[-1,1]$
and define 
\begin{equation}
\label{eq:J_ex_weak}
J(x,\Pi):=\exp\left(-\int_0^\tau \alpha(X^\Pi_t,\Pi_t)dt\right) \cdot g(X^\Pi_\tau)
-\int_0^\tau \exp\left(-\int_0^t \alpha(X^\Pi_s,\Pi_s)ds\right) \cdot f(X^\Pi_t) dt,
\end{equation}
where
$\tau$ is the first exit of $X^\Pi$ from the interval $D=(-1,1)$, 
$$
\alpha(x,a) := \begin{cases}
4a+9/2, & x\in D,\>a\in A, \\
\infty, & x\notin D,\>a\in A,
\end{cases},\qquad
f(x):=\frac{13}{2}\sinh(2\max\{x,0\}),\quad \text{for $x\in\R$,}
$$
and $g:\{-1,1\}\to\R$ is given by
$$
g(1):=-\sinh(2 ),\qquad g(-1):=2\sinh(1).
$$
Define the function 
$\widehat V:S\to\R$
by
$$
\widehat V(x) := \begin{cases}
-\sinh(2x), & x\geq0 , \\
-2\sinh(x),
 & x<0.
\end{cases}
$$
Note that 
$\widehat V$
is 
$C^1$,
piecewise
$C^2$
and, 
for all $x\in D\setminus\{0\}$,
it holds
$$\sgn(\widehat V(x))=-\sgn(x)
\quad\text{and}\quad
\widehat V''(x) = (1+3 \cdot 1_{\{x>0\}})\>\widehat V(x).
$$
We now show that the HJB equation 
\begin{equation}
\label{eq:HJB_weak_sol}
\sup_{a\in [-1,1]} \left[\frac{a^2}{2}\widehat V''-(4a+9/2)\widehat V\right]-f=0
\end{equation}
holds with boundary condition $\widehat V|_{\partial D}=g$
and the supremum attained at 
$a=\sgn(x)$. 
We first establish~\eqref{eq:HJB_weak_sol} for $x>0$. 
In this case~\eqref{eq:HJB_weak_sol} 
reads 
$$ \sup_{a\in [-1,1]} \left[\sinh(2x)\left((4a+9/2)-2a^2\right)\right]- \frac{13}{2}\sinh(2x)=0.$$
Now, the function $a\mapsto 4a-2a^2$ is increasing on $[-1,1]$. Hence the supremum is attained at $a=1$
and the equality follows. In the case
$x<0$,
the HJB equation in~\eqref{eq:HJB_weak_sol} takes the form
$$ \sup_{a\in [-1,1]} \left[-\sinh(x)\left(a^2/2-(4a+9/2)\right)\right]=0.$$
The function 
$a\mapsto a^2/2-4a-9/2$
is decreasing on the interval 
$[-1,1]$
and has a zero at
$a=-1$.
Hence the HJB equation in~\eqref{eq:HJB_weak_sol} holds with the stated boundary condition.
The classical martingale argument  
implies
that the Markov policy 
$\pi(x):=\sgn(x)$
is optimal for problem~\eqref{eq:Value_Function}
(with 
$J(x,\Pi)$
given in~\eqref{eq:J_ex_weak}) and its payoff 
$V^\pi$
equals the value function 
$V$
in~\eqref{eq:Value_Function}:
$
V(x)=V^\pi(x)=\widehat V(x)
$
for all $x\in D$.

\label{item:Tanaka_control2}
\end{enumerate}

\section{The policy improvement algorithm (PIA)}
\label{sec:PIA}
In order to develop the policy improvement algorithm, 
we first have to define the notion of an improvable Markov policy.

\begin{defin}
\label{def:Improvement}
A Markov policy $\pi$ is \textit{improvable} if 
$V^\pi\in \C$. The collection of improvable Markov policies is denoted by $I$.
A Markov policy $\pi'$ is an \textit{improvement} of $\pi\in I$ if, 
\begin{enumerate}[label=(\Roman*)]
\item for each $x\in D$
$$\pi'(x)\in \argmax_{a\in A}[\calL^aV^\pi(x)+f(x,a)],$$
or equivalently put
$$ \calL^{\pi'(x)}V^\pi(x)+f(x,\pi'(x))=\sup_{a\in A}[\calL^aV^\pi(x)+f(x,a)], $$
and
\label{enum:item_1_imporovable}
\item $\pi'$ is also a Markov policy.
\end{enumerate}
\end{defin}

\subsection{Improvement works}
\label{subsec:Improvment_works}
The PIA works by defining a sequence of improvements and their associated payoffs. More specifically,
$\pi_{n+1}$ is the improvement of the improvable Markov policy $\pi_n$ (in the sense of Definition~\ref{def:Improvement}). 
With this in mind, we make the following assumptions:

\begin{description}
\item[\namedlabel{As:A1}{\textbf{(As1)}}] 
there exists a non-empty subset $I^*$ of $I$  such that  $\pi_0\in I^*$ implies that, for each $n\in\N$, 
the Markov policy $\pi_n$ is a continuous function in $I^*$; 
\item[\namedlabel{As:A2}{\textbf{(As2)}}] 
for  any Markov policy $\pi_0\in I^*$, let the difference of consecutive payoff
processes converge in $L^1$ to a non-negative random variable:
$$\lim_{t\uparrow\infty}\left(V^{\pi_{n+1}}(X^{\pi_{n+1}}_{t\wedge\tau})-V^{\pi_{n}}(X^{\pi_{n+1}}_{t\wedge\tau})\right)\overset{L^1}{=}
Z_x\geq 0 \text{ a.s. for each $x\in D$.}
$$
\end{description}

\begin{remark}
The key assertions in Assumption~\ref{As:A1} are that,  
for every 
$n\in\N$,
the payoff 
$V^{\pi_n}$ is in $\C$ 
(see~\ref{item_2:Setting} in Section~\ref{subsec:Setting} for the definition
of $\C$) 
and that the $\sup$ in~\ref{enum:item_1_imporovable} of Definition~\ref{def:Improvement}
is attained.
\end{remark}

The following theorem asserts that the algorithm, under the assumptions above, actually improves improvable policies. 
We prove it in Section~\ref{subsec:proof_thm1} below.

\begin{thm}
\label{thm:1}
Under Assumptions~\ref{As:A1} and~\ref{As:A2}, the inequality
$$V^{\pi_{n+1}}(x)\geq V^{\pi_{n}}(x)\qquad\text{holds for each $n\in\N$ and all $x\in S$.}  $$
\end{thm}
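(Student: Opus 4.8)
The plan is to reduce the statement to a policy-evaluation identity for the payoff of every policy produced by the algorithm, and then to run the classical improvement comparison along the trajectory of $X^{\pi_{n+1}}$, using Assumption~\ref{As:A2} to dispose of the terminal term in the limit $t\to\infty$. Recall that by~\ref{As:A1} each $\pi_m$ ($m\in\N$) is a continuous Markov policy in $I^*\subseteq I$, so $V^{\pi_m}\in\C$; and since $f,g\ge0$ we have $V^{\pi_m}\ge0$, while $J(x,\pi_m(X^{\pi_m}))$ is integrable with $\E[J(x,\pi_m(X^{\pi_m}))]=V^{\pi_m}(x)$.

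\emph{Policy-evaluation identity.} The first step is to show that for every continuous Markov policy $\pi$ with $V^\pi\in\C$ one has
\begin{equation*}
\calL^{\pi(y)}V^\pi(y)+f(y,\pi(y))=0\qquad\text{for every }y\in D,
\end{equation*}
and, consequently, the process $N^\pi_t:=V^\pi(X^\pi_{t\wedge\tau})+\int_0^{t\wedge\tau}f(X^\pi_s,\pi(X^\pi_s))\,ds$ is a martingale with $\E[N^\pi_t]=V^\pi(x)$ (where $X^\pi_0=x$). To see this, note that $N^\pi_t=\E[J(x,\pi(X^\pi))\mid\F_t]$ by the strong Markov property of $X^\pi$ (ensured by $\pi$ being a Markov policy, cf.\ item~(iii) of that definition) together with the identity $V^\pi(X^\pi_\tau)=g(X^\pi_\tau)$ on $\{\tau<\infty\}$ (immediate, since from an exit point of $D$ one has $\tau=0$); in particular $N^\pi$ is a martingale. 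On the other hand, \eqref{gen} applied to $\phi=V^\pi\in\C$ shows that $M^\pi_t:=V^\pi(X^\pi_{t\wedge\tau})-\int_0^{t\wedge\tau}\calL^{\pi(X^\pi_s)}V^\pi(X^\pi_s)\,ds$ is a martingale. Hence $N^\pi_t-M^\pi_t=\int_0^{t\wedge\tau}h(X^\pi_s)\,ds$, with $h(y):=\calL^{\pi(y)}V^\pi(y)+f(y,\pi(y))$, is a continuous martingale of finite variation vanishing at $t=0$, and therefore $\equiv0$. Finally, $h$ is continuous on $D$ because $(x,a)\mapsto\calL^aV^\pi(x)$ is jointly continuous on $D\times A$, $\pi$ is continuous and $f$ is continuous; starting $X^\pi$ from an arbitrary $y\in D$ and using that $X^\pi$ is right-continuous with $\tau>0$ a.s.\ (as $D$ is open) then forces $h(y)=0$, which is the displayed identity.

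Now fix $n\in\N$ and $x\in S$. If $x\notin D$ then $\tau=0$ and $V^{\pi_n}(x)=g(x)=V^{\pi_{n+1}}(x)$, so we may assume $x\in D$. Since $\pi_{n+1}(X^{\pi_{n+1}})\in\A_x$ and $V^{\pi_n}\in\C$, \eqref{gen} applied to $\phi=V^{\pi_n}$ along $X^{\pi_{n+1}}$ shows that $M_t:=V^{\pi_n}(X^{\pi_{n+1}}_{t\wedge\tau})-\int_0^{t\wedge\tau}\calL^{\pi_{n+1}(X^{\pi_{n+1}}_s)}V^{\pi_n}(X^{\pi_{n+1}}_s)\,ds$ is a martingale with $\E[M_t]=V^{\pi_n}(x)$. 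Subtracting $M_t$ from the martingale $N^{\pi_{n+1}}_t$ of the preceding paragraph (for which $\E[N^{\pi_{n+1}}_t]=V^{\pi_{n+1}}(x)$) gives
\begin{equation*}
N^{\pi_{n+1}}_t-M_t=\big(V^{\pi_{n+1}}-V^{\pi_n}\big)(X^{\pi_{n+1}}_{t\wedge\tau})+\int_0^{t\wedge\tau}\Big(f(X^{\pi_{n+1}}_s,\pi_{n+1}(X^{\pi_{n+1}}_s))+\calL^{\pi_{n+1}(X^{\pi_{n+1}}_s)}V^{\pi_n}(X^{\pi_{n+1}}_s)\Big)\,ds.
\end{equation*}
By the improvement property of $\pi_{n+1}$ (Definition~\ref{def:Improvement}) and the policy-evaluation identity for $\pi_n$, for every $y\in D$
\begin{equation*}
f(y,\pi_{n+1}(y))+\calL^{\pi_{n+1}(y)}V^{\pi_n}(y)=\sup_{a\in A}\big[f(y,a)+\calL^aV^{\pi_n}(y)\big]\ge f(y,\pi_n(y))+\calL^{\pi_n(y)}V^{\pi_n}(y)=0,
\end{equation*}
so the integrand above is non-negative and hence $N^{\pi_{n+1}}_t-M_t\ge\big(V^{\pi_{n+1}}-V^{\pi_n}\big)(X^{\pi_{n+1}}_{t\wedge\tau})$ pathwise. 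The left-hand side is integrable (being a martingale) and dominates the right, so taking expectations yields $V^{\pi_{n+1}}(x)-V^{\pi_n}(x)=\E[N^{\pi_{n+1}}_t-M_t]\ge\E\big[(V^{\pi_{n+1}}-V^{\pi_n})(X^{\pi_{n+1}}_{t\wedge\tau})\big]$ for every $t\ge0$. Letting $t\to\infty$ and invoking the $L^1$-convergence in~\ref{As:A2}, the right-hand side converges to $\E[Z_x]\ge0$, whence $V^{\pi_{n+1}}(x)\ge V^{\pi_n}(x)$, as required.

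The step I expect to be the main obstacle is the policy-evaluation identity: establishing rigorously, in this abstract weak formulation with a general state space, that $N^\pi$ is a martingale --- which rests on the strong Markov property of $X^\pi$, the Markov-policy property in item~(iii) of the definition of a Markov policy, the identification $V^\pi=g$ on the exit boundary, and the integrability furnished by $f,g\ge0$ together with condition~(iv) in the definition of $\A_x$ --- and then upgrading ``$h=0$ along almost every trajectory of $X^\pi$'' to the pointwise statement ``$h\equiv0$ on $D$'', for which the continuity of $\pi$ granted by~\ref{As:A1} is exactly the ingredient needed to make $h$ continuous. Once this identity is in hand, the comparison of the preceding paragraph is short and essentially mechanical.
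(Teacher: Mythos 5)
Your proposal is correct and follows essentially the same route as the paper: your ``policy-evaluation identity'' is exactly the paper's Lemma~\ref{lem:lemma4} (proved by the same cancellation of the two martingales and the right-continuity argument), and your comparison of $N^{\pi_{n+1}}-M$ is just the paper's supermartingale decomposition of $S_t=(V^{\pi_{n+1}}-V^{\pi_n})(X^{\pi_{n+1}}_{t\wedge\tau})$ written additively, concluded in the same way via the improvement property and the $L^1$-limit in~\ref{As:A2}.
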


\subsection{Convergence of payoffs}
Assume from now on that Assumptions~\ref{As:A1} and~\ref{As:A2} hold and that we have fixed an improvable Markov policy 
$\pi_0$ in $I^*$. Denote by
$(\pi_n)_{n\in\N}$ 
the sequence of Markov policies in $I^*$
defined by the PIA 
started at
$\pi_0$
(see the beginning of Section~\ref{subsec:Improvment_works}).

\begin{description}
\item[\namedlabel{As:A3}{\textbf{(As3)}}] The value function $V$, defined in~\eqref{eq:Value_Function}, is finite on the domain $D$.
\item[\namedlabel{As:A4}{\textbf{(As4)}}] There is a subsequence $(n_k)_{k\in\N}$ such that
$$
\lim_{k\nearrow\infty}\calL^{\pi_{n_k+1}}V^{\pi_{n_k}}(x)+f(x,\pi_{n_k+1}(x))= 0\quad\text{ uniformly in $x\in D$.}
$$
\item[\namedlabel{As:A5}{\textbf{(As5)}}]  For each $x\in S$, each $\Pi\in \A_x$ and each $n\in\N$
the following limit holds:
$$
V^{\pi_n}(X^\Pi_{t\wedge \tau})\L g(X^\Pi_{\tau})1_{(\tau<\infty)}\qquad\text{as $t\to\infty$.}
$$
\end{description}

The next result states that the PIA works. Its proof is in Section~\ref{subsec:proof_thm2} below.

\begin{thm}
\label{thm:2}
Under Assumptions~\ref{As:A1}--\ref{As:A5}, the following limit holds:
$$
V^{\pi_n}(x)\uparrow V(x)\qquad\text{for all $x\in S$.}
$$
\end{thm}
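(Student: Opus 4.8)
The plan is to establish the two inequalities $\limsup_n V^{\pi_n}(x)\le V(x)$ and $\liminf_n V^{\pi_n}(x)\ge V(x)$ separately, using Theorem~\ref{thm:1} to guarantee that the monotone limit $V^\infty(x):=\lim_n V^{\pi_n}(x)$ exists (possibly $+\infty$ a priori, but finite by Assumption~\ref{As:A3} once we have the upper bound). The easy direction is the upper bound: each $\pi_n$ is a Markov policy, so $\pi_n\in\A_x$ for every $x\in D$, hence $V^{\pi_n}(x)\le\sup_{\Pi\in\A_x}V^\Pi(x)=V(x)$ by~\eqref{eq:Vx}; taking the limit gives $V^\infty(x)\le V(x)$, and combined with Assumption~\ref{As:A3} this shows $V^\infty$ is finite on $D$. (On $S\setminus D$ the stopping time $\tau=0$ and both sides reduce to $g(x)$.)

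For the lower bound I would fix $x\in D$ and an arbitrary control $\Pi\in\A_x$ and show $V^\infty(x)\ge V^\Pi(x)$; taking the sup over $\Pi$ then finishes. The idea is the usual verification-type argument applied along the improving subsequence $(n_k)$ from Assumption~\ref{As:A4}. Since $V^{\pi_{n_k}}\in\C$ (by Assumption~\ref{As:A1}), the martingale property~\eqref{gen} applies with $\phi=V^{\pi_{n_k}}$, so
$$
\E\!\left[V^{\pi_{n_k}}(X^\Pi_{t\wedge\tau})\right]
= V^{\pi_{n_k}}(x) + \E\!\left[\int_0^{t\wedge\tau}\calL^{\Pi_s}V^{\pi_{n_k}}(X^\Pi_s)\,ds\right].
$$
Now use the defining property of an improvement: $\pi_{n_k+1}(y)$ maximises $a\mapsto\calL^a V^{\pi_{n_k}}(y)+f(y,a)$ over $A$, so for every $s$,
$$
\calL^{\Pi_s}V^{\pi_{n_k}}(X^\Pi_s) + f(X^\Pi_s,\Pi_s)
\le \calL^{\pi_{n_k+1}(X^\Pi_s)}V^{\pi_{n_k}}(X^\Pi_s) + f\bigl(X^\Pi_s,\pi_{n_k+1}(X^\Pi_s)\bigr).
$$
By Assumption~\ref{As:A4} the right-hand side is $\le\varepsilon_k$ with $\varepsilon_k\to0$ uniformly in the spatial argument, so $\calL^{\Pi_s}V^{\pi_{n_k}}(X^\Pi_s)\le\varepsilon_k-f(X^\Pi_s,\Pi_s)$. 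Substituting back yields
$$
\E\!\left[V^{\pi_{n_k}}(X^\Pi_{t\wedge\tau})\right]
\le V^{\pi_{n_k}}(x) + \varepsilon_k\,\E[t\wedge\tau] - \E\!\left[\int_0^{t\wedge\tau}f(X^\Pi_s,\Pi_s)\,ds\right].
$$
Rearranging,
$$
\E\!\left[\int_0^{t\wedge\tau}f(X^\Pi_s,\Pi_s)\,ds\right]
+ \E\!\left[V^{\pi_{n_k}}(X^\Pi_{t\wedge\tau})\right]
\le V^{\pi_{n_k}}(x) + \varepsilon_k\,\E[t\wedge\tau].
$$
Then I would pass to the limit $t\to\infty$: the first term converges to $\E\!\left[\int_0^\tau f(X^\Pi_s,\Pi_s)\,ds\right]$ by monotone convergence (using $f\ge0$), and $\E[V^{\pi_{n_k}}(X^\Pi_{t\wedge\tau})]\to\E[g(X^\Pi_\tau)1_{(\tau<\infty)}]$ by Assumption~\ref{As:A5}. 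This gives $V^\Pi(x)\le V^{\pi_{n_k}}(x)+\varepsilon_k\,\E[\tau]$, and finally letting $k\to\infty$ (with $\varepsilon_k\to0$) yields $V^\Pi(x)\le V^\infty(x)$, provided $\E[\tau]<\infty$.

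The main obstacle I anticipate is controlling the error term $\varepsilon_k\,\E[t\wedge\tau]$: if $\tau$ is not integrable uniformly over $\Pi\in\A_x$, the naive bound fails. I would handle this by first taking $t\to\infty$ to get $V^\Pi(x)\le V^{\pi_{n_k}}(x)+\varepsilon_k\,\E[\tau]$ only when $\E[\tau]<\infty$; more robustly, since $\varepsilon_k\to0$ uniformly and $f\ge0$, one can instead keep $t$ fixed, let $k\to\infty$ first to kill the $\varepsilon_k$ term (giving $\E[\int_0^{t\wedge\tau}f\,ds]+\E[V^\infty(X^\Pi_{t\wedge\tau})]\le V^\infty(x)$ once we know $V^{\pi_{n_k}}(X^\Pi_{t\wedge\tau})\to V^\infty(X^\Pi_{t\wedge\tau})$ suitably, e.g. by monotone convergence since the $V^{\pi_n}$ increase), and only then send $t\to\infty$ using Assumption~\ref{As:A5} and monotone convergence. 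Care is also needed with the interchange of the $k$-limit and the expectation $\E[V^{\pi_{n_k}}(X^\Pi_{t\wedge\tau})]$; monotonicity from Theorem~\ref{thm:1} makes this a monotone-convergence application provided $V^\infty$ is bounded on the relevant compact exit region, which follows from the already-established finiteness of $V^\infty$ on $D$ together with continuity. A secondary technical point is the measurability and integrability of $s\mapsto\calL^{\Pi_s}V^{\pi_{n_k}}(X^\Pi_s)$, which is ensured by the joint continuity in~\ref{item_2:Setting} together with right-continuity of $X^\Pi$ and adaptedness of $\Pi$.
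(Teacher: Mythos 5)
Your proposal is correct and follows essentially the same route as the paper: the easy inequality $V^{\pi_n}\le V$ from $\pi_n\in\A_x$, and for the reverse inequality the verification argument along the subsequence of~\ref{As:A4}, crucially taking $k\to\infty$ first (for fixed $t$, so the uniform error $\varepsilon_k$ is killed without needing $\E[\tau]<\infty$) and only then $t\to\infty$ via~\ref{As:A5}. The one point you leave slightly vague --- passing from $\E[V^{\pi_{n_k}}(X^\Pi_{t\wedge\tau})]$ to $\E[g(X^\Pi_\tau)1_{(\tau<\infty)}]$ for the limit function rather than for each fixed $n_k$ --- is handled in the paper by combining~\ref{As:A5} with Fatou's lemma and the bound $\Vl\ge V^{\pi_{n_k}}\ge 0$, which is exactly the monotonicity device you sketch.
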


\subsection{Convergence of policies}
Assume from now on that Assumptions~\ref{As:A1}--\ref{As:A5}
hold and that, as before, we have fixed a $\pi_0$ in $I^*$
together with 
the sequence of improved Markov policies 
$(\pi_n)_{n\in\N}$.

\begin{description}
\item[\namedlabel{As:A6}{\textbf{(As6)}}] For any $\pi_0\in I^*$, the sequence $(\pi_n)_{n\in\N}$ is sequentially precompact in the 
topology of uniform convergence on compacts on the space of continuous functions $C(S,A)$.
\item[\namedlabel{As:A7}{\textbf{(As7)}}] For any sequence $(\rho_n)_{n\in\N}$ in $I^*$, 
such that
\begin{enumerate}[label=(\roman*)]
\item $\exists$ Markov policy $\rho$, such that $\rho_n\nto \rho$ uniformly on compacts in $S$, 
\item $\phi_n\in \C$ for all $n\in\N$ and $\phi_n\nto\phi$ pointwise,
\item $\calL^{\rho_n}\phi_n\nto Q$ uniformly on compacts in $S$,
\end{enumerate}
then
$$
\phi\in \C,\quad \calL^{\rho}\phi=Q \text{ and } \calL^{\rho}\phi_n-\calL^{\rho_n}\phi_n\nto 0
$$
uniformly on compacts in
$S$.
\item[\namedlabel{As:A8}{\textbf{(As8)}}] For each $x\in D$ and each $\Pi\in \A_x$,
$$
V(X^\Pi_{t\wedge \tau})\L g(X^\Pi_{\tau})1_{(\tau<\infty)}\qquad\text{as $t\to\infty$,}
$$
holds.
\end{description}

The next theorem states that the sequence of policies produced by the PIA
contains a uniformly convergent subsequence.  
We give a proof of this fact in Section~\ref{subsec:proof_thm3} below.

\begin{thm}
\label{thm:3}
Under Assumptions~\ref{As:A1}--\ref{As:A8}, for any $\pi_0$ in $I^*$
and the corresponding sequence of improved Markov policies 
$(\pi_n)_{n\in\N}$,
there exists a subsequence $(\pi_{n_k})_{k\in\N}$ such that $\pi_{n_k}\nto \pi^*$ 
in the topology of uniform convergence on compacts 
and $V^{\pi^*}=V$.
\end{thm}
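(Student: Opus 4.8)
The plan is to combine the precompactness provided by~\ref{As:A6} with a stability (closedness) argument driven by~\ref{As:A7}, and then identify the limiting policy as optimal by verifying it is its own improvement and appealing to a verification-type argument using~\ref{As:A8}. First I would invoke~\ref{As:A6} to extract a subsequence $(\pi_{n_k})$ converging uniformly on compacts to some continuous $\pi^*:S\to A$; a priori $\pi^*$ need only be a continuous function, so part of the work is to show it is a genuine Markov policy with $V^{\pi^*}\in\C$. The natural bridge is the following chain of limits: by Theorem~\ref{thm:2} we already know $V^{\pi_n}\uparrow V$ pointwise, hence $\phi_{n_k}:=V^{\pi_{n_k}}\to V$ pointwise; by~\ref{As:A4} (possibly passing to a further subsequence so that the $(n_k)$ here refines the one in~\ref{As:A4}) we have $\calL^{\pi_{n_k+1}}V^{\pi_{n_k}}(x)+f(x,\pi_{n_k+1}(x))\to 0$ uniformly on $D$, and since $f$ is continuous and $(x,a)\mapsto f(x,a)$ is bounded on compacts while $\pi_{n_k+1}\to\pi^*$ uniformly on compacts (again refine the subsequence so both $(\pi_{n_k})$ and $(\pi_{n_k+1})$ converge to $\pi^*$, using~\ref{As:A6} once more), we deduce $\calL^{\pi_{n_k+1}}V^{\pi_{n_k}}(x)\to -f(x,\pi^*(x))=:Q(x)$ uniformly on compacts. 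Now~\ref{As:A7}, applied with $\rho_n:=\pi_{n_k+1}$, $\phi_n:=V^{\pi_{n_k}}$, $\rho:=\pi^*$, yields $V\in\C$, $\calL^{\pi^*}V=Q$, i.e.
$$\calL^{\pi^*}V(x)+f(x,\pi^*(x))=0\qquad\text{for all }x\in D,$$
and additionally $\calL^{\pi^*}\phi_{n_k}-\calL^{\pi_{n_k+1}}\phi_{n_k}\to 0$ uniformly on compacts.

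Next I would upgrade the equality above to the full HJB inequality. For each fixed $a\in A$ and each $x\in D$, since $\pi_{n_k+1}(x)\in\argmax_{a}[\calL^aV^{\pi_{n_k}}(x)+f(x,a)]$ by Definition~\ref{def:Improvement}\ref{enum:item_1_imporovable}, we have $\calL^{\pi_{n_k+1}}V^{\pi_{n_k}}(x)+f(x,\pi_{n_k+1}(x))\ge \calL^aV^{\pi_{n_k}}(x)+f(x,a)$. Letting $k\to\infty$: the left side $\to 0$ by~\ref{As:A4}; for the right side, $\calL^aV^{\pi_{n_k}}(x)\to\calL^aV(x)$ because $\phi_{n_k}=V^{\pi_{n_k}}\to V$ pointwise with all iterates in $\C$ and the joint continuity/closedness machinery of~\ref{item_2:Setting} and~\ref{As:A7} (here one wants a version of~\ref{As:A7} with the constant policy $\rho_n\equiv a$, or more directly the joint continuity of $(x,a)\mapsto\calL^a\phi(x)$ for $\phi\in\C$ combined with the already-established $V\in\C$; if necessary this convergence is argued along a further subsequence and then promoted by uniqueness of the limit). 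Hence $0\ge \calL^aV(x)+f(x,a)$ for every $a$, so together with the equality at $a=\pi^*(x)$ we get
$$\sup_{a\in A}\bigl[\calL^aV(x)+f(x,a)\bigr]=\calL^{\pi^*(x)}V(x)+f(x,\pi^*(x))=0\qquad\text{for all }x\in D,$$
i.e. $V$ solves the HJB equation with $V\in\C$ and $\pi^*$ attains the supremum. In particular $\pi^*$ is an improvement of any improvable policy whose payoff is $V$; I would then check that $\pi^*$ is a Markov policy — this is exactly the content of Definition~\ref{def:Improvement}\ref{enum:item_1_imporovable}--(II) and is where~\ref{As:A7} guaranteeing $V\in\C$, together with the assumptions already granting that improvements of policies in $I^*$ are Markov policies, does the work; alternatively~\ref{As:A1} applied with a suitable base point yields that $\pi^*\in I^*$ is a continuous Markov policy.

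Finally I would run the standard verification argument to conclude $V^{\pi^*}=V$. Since $V\in\C$, by~\ref{item_2:Setting}(iii) (the martingale property~\eqref{gen} applied to the control $\Pi=\pi^*(X^{\pi^*})\in\A_x$) the process $V(X^{\pi^*}_{t\wedge\tau})-\int_0^{t\wedge\tau}\calL^{\pi^*(X^{\pi^*}_s)}V(X^{\pi^*}_s)\,ds$ is a martingale; using $\calL^{\pi^*(x)}V(x)=-f(x,\pi^*(x))$ this gives $\E[V(X^{\pi^*}_{t\wedge\tau})]+\E\bigl[\int_0^{t\wedge\tau}f(X^{\pi^*}_s,\pi^*(X^{\pi^*}_s))\,ds\bigr]=V(x)$. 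Letting $t\to\infty$ and using~\ref{As:A8} to replace $V(X^{\pi^*}_{t\wedge\tau})$ by $g(X^{\pi^*}_\tau)1_{(\tau<\infty)}$ in $L^1$, together with monotone convergence for the integral term (the integrand is nonnegative since $f\ge 0$), yields $V(x)=\E[J(x,\pi^*(X^{\pi^*}))]=V^{\pi^*}(x)$ for all $x\in D$ (and trivially on $S\setminus D$, where $\tau=0$). Combined with $V^{\pi^*}(x)\le V(x)$ from~\eqref{eq:Vx}, this gives $V^{\pi^*}=V$, completing the proof. The main obstacle I anticipate is the bookkeeping of nested subsequences: one must extract a single subsequence $(n_k)$ along which simultaneously~\ref{As:A4} holds, $\pi_{n_k}\to\pi^*$, and $\pi_{n_k+1}\to\pi^*$ (the latter two both via~\ref{As:A6}, noting $(\pi_{n+1})$ is also a sequence in $I^*$), and then make sure that the hypotheses of~\ref{As:A7} are literally met with the chosen indexing; a secondary subtlety is justifying $\calL^aV^{\pi_{n_k}}(x)\to\calL^aV(x)$ for the HJB \emph{inequality}, which requires a closedness statement for the fixed-control generator rather than the policy-dependent one in~\ref{As:A7}, and may need to be handled by an extra diagonal/subsequence step or by directly invoking the joint continuity in~\ref{item_2:Setting} once $V\in\C$ is known.
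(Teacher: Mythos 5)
Your proposal is correct in substance but takes a genuinely different route from the paper's. The paper never establishes the equation $\calL^{\pi^*}V+f(\cdot,\pi^*(\cdot))=0$ explicitly and never applies the martingale property~\eqref{gen} to $V$ itself. Instead it fixes $X^{\pi^*}$, forms $S^j_t=V^{\pi_{n_j}}(X^{\pi^*}_{t\wedge\tau})+\int_0^{t\wedge\tau}f(X^{\pi^*}_s,\pi^*(X^{\pi^*}_s))\,ds$ for the \emph{unshifted} convergent subsequence, uses~\eqref{gen} for each $V^{\pi_{n_j}}\in\C$ together with Lemma~\ref{lem:lemma4} to write the drift as $(\calL^{\pi^*}-\calL^{\pi_{n_j}})V^{\pi_{n_j}}+f(\cdot,\pi^*(\cdot))-f(\cdot,\pi_{n_j}(\cdot))$, kills this in the limit using the third conclusion of~\ref{As:A7} (applied with $\rho_j=\pi_{n_j}$, $\phi_j=V^{\pi_{n_j}}$, hypothesis (iii) being supplied by Lemma~\ref{lem:lemma4} rather than by~\ref{As:A4}), and then obtains $V(x)=\E\bigl[V(X^{\pi^*}_{t\wedge\tau})+\int_0^{t\wedge\tau}f\,ds\bigr]$ by monotone convergence in $j$ before letting $t\to\infty$ with~\ref{As:A8}. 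Your route applies~\ref{As:A7} to the shifted sequence $(\pi_{n_k+1})$ with~\ref{As:A4} supplying hypothesis (iii), extracts $V\in\C$ and the Poisson equation along the limiting policy, and then runs a clean verification argument via~\eqref{gen} for $V$. Both are valid: the paper's version needs neither~\ref{As:A4} nor $V\in\C$ at this stage and avoids the shifted sequence, while yours avoids the localisation needed to pass to the limit inside the time integral.

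Two blemishes, neither fatal. First, you cannot in general refine so that $\pi_{n_k}$ and $\pi_{n_k+1}$ converge to the \emph{same} limit $\pi^*$; a further extraction only gives $\pi_{n_k+1}\to\tilde\pi$ for some possibly different $\tilde\pi$. Your argument then proves $V^{\tilde\pi}=V$, which still yields the theorem after relabelling the subsequence as $(n_k+1)$ — but say so rather than asserting the limits coincide. Second, the entire paragraph deriving the full HJB equation $\sup_{a\in A}[\calL^aV(x)+f(x,a)]=0$ is not needed for the conclusion and is the weakest part of the write-up (the convergence $\calL^aV^{\pi_{n_k}}(x)\to\calL^aV(x)$ for fixed $a$ does not follow from~\ref{As:A7} as stated); it should simply be dropped. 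Finally, you are right to worry that~\ref{As:A6} only gives a limit in $C(S,A)$ and not a Markov policy, but note that the paper's own proof (and hypothesis (i) of~\ref{As:A7}) effectively assumes this, so no further argument is expected of you here.
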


\section{Examples of the PIA}
\label{sec:Exmaples_diffusions}
\subsection{Discounted infinite horizon controlled diffusion.}
\label{subsec:discounted_controlled_diffusion}
This section gives an overview of the results in~\cite{JMS_1}.
Define $D:=\R^d$ and $S:=\R^d\cup\{\partial\}$ and let 
$\C=C^2_b(\R^d,\R)$ be the space of bounded, $C^2$, real-valued functions on $\R^d$. 
Suppose that $X$ is a controlled (killed) It\^o diffusion in $\R^d$, 
so that 
\begin{equation}
\label{eq:diff_gen}
\calL^a\phi(\cdot)=\half\sigma(\cdot,a)^TH\phi \sigma(\cdot,a)+\mu(\cdot,a)^T\nabla \phi-\alpha(\cdot,a)\phi,
\end{equation}
where $H\phi$ (resp. $\nabla \phi$) denotes the Hessian 
(resp. gradient) with entries $\frac{\partial^2\phi}{\partial x_i\partial x_j}$, $1\leq i,j \leq d$
(resp. $\frac{\partial\phi}{\partial x_i}$, $1\leq i \leq d$). 
Furthermore, we make the following assumptions on the deterministic characteristics of the model:
\begin{description}
\item[\namedlabel{aleph1}{\textbf{(\boldmath$\aleph1$)}}] 
$\sigma(x,a)$,
$\mu(x,a)$, $\alpha(x,a)$ and $f(x,a)$ are uniformly (in $a$) Lipschitz on
compacts in $\R^d$ and are continuous in $a$; $\alpha$ is bounded below by
a positive constant $\lambda>0$, $\sigma$ is uniformly elliptic and $f$ is uniformly bounded by
a (large) constant $M$.  
\item[\namedlabel{aleph2}{\textbf{(\boldmath$\aleph2$)}}] The control set $A$ is a compact
interval $[a,b]$.
\end{description}

For every $h\in \C$ and $x\in \R^d$, let $I_h(x)$ denote an element of
$\argmax_{a\in A}[\calL^a h(x,a)+f(x,a)]$.

\begin{description}
\item[\namedlabel{aleph3}{\textbf{(\boldmath$\aleph3$)}}] 
If the sequence of functions $(h_n)_{n\in\N}$ is in $C^2$ and the sequence $(Hh_n)_{n\in\N}$ is uniformly bounded on compacts,
then we may choose the sequence of functions $(I_{h_n})_{n\in\N}$ to be uniformly Lipschitz on compacts.
\end{description}

\begin{remark}
\begin{enumerate}
\item The assumption in~\ref{aleph3} is very strong. Neverthless, if $\sigma$ is
independent of $a$ and bounded, $\mu(x,a)=\mu_1(x)-ma$, $\alpha(x,a)=\alpha_1(x)+ca$
and $f(x,a)=f_1(x)-f_2(a)$ with $f_2\in C^1$ and with strictly positive
derivative on $A$, and Assumptions~\ref{aleph1} and~\ref{aleph1} hold, then~\ref{aleph3}  
holds.
\item We stress that the assumptions in \ref{aleph1}--\ref{aleph3}
do not depend on the stochastic behaviour of the model but are given explicitly in terms
of its deterministic characteristics. This makes the PIA provably convergent for a broad
class of diffusion control problems. 
\end{enumerate}
\end{remark}

\begin{proposition}
\label{prop:4}
Under Assumptions~\ref{aleph1}--\ref{aleph3}, Assumptions~\ref{As:A1}--\ref{As:A8} 
hold for the (possibly killed) controlled diffusion process with generator~\eqref{eq:diff_gen} 
and the PIA converges when started at any locally Lipschitz Markov policy $\pi_0$.
\end{proposition}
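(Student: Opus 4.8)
The plan is to verify Assumptions~\ref{As:A1}--\ref{As:A8} in turn, leaning on two facts specific to the diffusion setting. First, for a locally Lipschitz Markov policy $\pi$ the payoff $V^\pi$ is, by a standard Feynman--Kac/verification argument, the unique bounded solution of the uniformly elliptic linear equation $\calL^\pi V^\pi(x)+f(x,\pi(x))=0$ on $\R^d$; since $\alpha\geq\lambda>0$ and $|f|\leq M$, the maximum principle gives $\|V^\pi\|_\infty\leq M/\lambda$, and interior Schauder estimates give $V^\pi\in C^{2,\gamma}_{loc}$, so $V^\pi\in\C=C^2_b$. Second, uniform ellipticity (together with the non‑explosion implicit in~\ref{aleph1}) makes the martingale problem for $\calL^\pi$ well posed in the sense of Stroock--Varadhan, so every locally Lipschitz selection is automatically a genuine Markov policy. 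Throughout, $\alpha\geq\lambda$ forces $\tau<\infty$ a.s.\ with $X^\Pi_{t\wedge\tau}\to X^\Pi_\tau=\partial$, and all the data vanish at the cemetery state $\partial$.

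Take $I^*$ to be the set of locally Lipschitz Markov policies $\pi$ with $V^\pi\in C^2_b$. Then~\ref{As:A1} is the statement that $I^*$ is closed under improvement: given $\pi_n\in I^*$, the $\sup$ in Definition~\ref{def:Improvement} is attained because $A=[a,b]$ is compact and $a\mapsto\calL^aV^{\pi_n}(x)+f(x,a)$ is continuous, and by~\ref{aleph3} the selection $\pi_{n+1}=I_{V^{\pi_n}}$ can be taken locally Lipschitz; it is then a Markov policy by well‑posedness of the martingale problem, and $V^{\pi_{n+1}}\in C^2_b$ by the elliptic regularity above, closing the induction with $\pi_0$. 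Assumption~\ref{As:A3} is immediate, since $V(x)=\sup_\Pi V^\Pi(x)\leq M\,\E[\tau]\leq M/\lambda$. Assumptions~\ref{As:A2}, \ref{As:A5} and~\ref{As:A8} all follow from one observation: each of $V^{\pi_{n+1}}-V^{\pi_n}$, $V^{\pi_n}$ and $V$ is bounded and vanishes at $\partial$, while $X^\Pi_{t\wedge\tau}\to\partial$ a.s., so dominated convergence gives the required $L^1$ limits (with $Z_x=0\geq0$ in~\ref{As:A2}). Assumption~\ref{As:A7} is a closedness statement for $(\C,\calL^{\cdot})$: given $\rho_n\to\rho$ and $\calL^{\rho_n}\phi_n\to Q$ uniformly on compacts, interior $W^{2,p}$/Schauder estimates give locally uniform $C^{2,\gamma}$ bounds on $(\phi_n)$, so the differences $\sigma\sigma^T(\cdot,\rho)-\sigma\sigma^T(\cdot,\rho_n)$, $\mu(\cdot,\rho)-\mu(\cdot,\rho_n)$, $\alpha(\cdot,\rho)-\alpha(\cdot,\rho_n)$ (which tend to $0$ uniformly on compacts by continuity in $a$ and $\rho_n\to\rho$) force $\calL^\rho\phi_n-\calL^{\rho_n}\phi_n\to0$ uniformly on compacts; passing to the limit then identifies $\phi\in C^2_b$ with $\calL^\rho\phi=Q$.

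The two delicate points are~\ref{As:A6} and~\ref{As:A4}. For~\ref{As:A6} the plan is a bootstrap: the uniform bound $\|V^{\pi_n}\|_\infty\leq M/\lambda$ feeds interior elliptic estimates to produce locally \emph{uniform} bounds on $\|V^{\pi_n}\|_{C^{2,\gamma}}$, in particular a locally uniform bound on the Hessians $HV^{\pi_n}$; then~\ref{aleph3} delivers a locally uniform Lipschitz bound on $\pi_{n+1}=I_{V^{\pi_n}}$, and Arzel\`a--Ascoli yields the claimed sequential precompactness in $C(S,A)$. For~\ref{As:A4}: Theorem~\ref{thm:1} gives $V^{\pi_n}\uparrow V_\infty\leq M/\lambda$, and Dynkin's formula for $V^{\pi_n}$ along $X^{\pi_{n+1}}$ (using $\calL^{\pi_n}V^{\pi_n}+f(\cdot,\pi_n)=0$) yields $V^{\pi_{n+1}}(x)-V^{\pi_n}(x)=\E_x\big[\int_0^\tau R_n(X^{\pi_{n+1}}_s)\,ds\big]$ with $R_n:=\calL^{\pi_{n+1}}V^{\pi_n}+f(\cdot,\pi_{n+1})=\sup_{a}[\calL^aV^{\pi_n}+f(\cdot,a)]\geq0$; telescoping and boundedness force $\E_x\big[\int_0^\tau R_n(X^{\pi_{n+1}}_s)\,ds\big]\to0$ for each $x$, and combining this averaged decay with the locally uniform $C^{2,\gamma}$ precompactness of $(V^{\pi_n})$ along a subsequence $(n_k)$ (whose $C^2_{loc}$ limit must satisfy the HJB equation) gives $R_{n_k}\to0$ locally uniformly.

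I expect the main obstacle to be exactly the uniformity in $n$ underlying~\ref{As:A6} and~\ref{As:A4}: when $\sigma$ depends on the control the second‑order coefficients $\half\sigma\sigma^T(\cdot,\pi_n(\cdot))$ vary with $n$, so the interior estimates must be run as a coupled bootstrap for the whole sequence (Calder\'on--Zygmund for uniform $W^{2,p}_{loc}$ bounds, Sobolev embedding for equicontinuity of $(\pi_n)$, then Schauder for uniform $C^{2,\gamma}_{loc}$ bounds), and the convergence in~\ref{As:A4} must further be promoted from ``uniform on compacts'' to ``uniform on all of $D=\R^d$'' using the global structural hypotheses in~\ref{aleph1}. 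This is precisely the technical content supplied by~\cite{JMS_1}, and once it is in place Theorems~\ref{thm:1}--\ref{thm:3} apply verbatim, giving the convergence of the PIA started at any locally Lipschitz $\pi_0$.
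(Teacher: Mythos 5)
Your proposal is correct and follows essentially the same route as the paper's proof: \ref{aleph3} for \ref{As:A1} and (via Arzel\`a--Ascoli) \ref{As:A6}, the bounds $|f|\leq M$ and $\alpha\geq\lambda>0$ for \ref{As:A3} and for the convergence $X^\Pi_{t\wedge\tau}\to\partial$ underlying \ref{As:A2}, \ref{As:A5} and \ref{As:A8}, subsequential $C^2_{loc}$ convergence of $(V^{\pi_{n_k}},\pi_{n_k+1})$ for \ref{As:A4}, and classical elliptic/parabolic theory for \ref{As:A7}. You supply more of the PDE detail (Schauder/$W^{2,p}$ bootstrap, maximum principle, the telescoping identity behind \ref{As:A4}) that the paper defers to~\cite{JMS_1}, and you correctly flag the two genuine technical points also left to~\cite{JMS_1}: establishing continuity of $V^{\pi_n}$ before the elliptic estimates can be applied, and upgrading the convergence in \ref{As:A4} from uniform on compacts to uniform on all of $D$.
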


\begin{proof}
Note that 
$\calL^a\phi$ 
is jointly continuous if 
$\phi$ is in $\C$ and (with the usual trick to deal with killing) \eqref{gen} 
holds for any control $\Pi$ such that there is
a solution to the killed equation 
$$
X^\Pi_t=(x+\int_0^t\sigma(X^\Pi_s,\Pi_s)dB_s+\int_0^t \mu(X^\Pi_s,\Pi_s)ds)1_{(t<\tau)}+\partial 1_{(t\geq \tau)}.
$$
Furthermore, any locally Lipschitz $\pi$ is a Markov policy by strong uniqueness of the solution to the SDE.  
We now establish Assumptions~\ref{As:A1}--\ref{As:A8}. 

\noindent \ref{As:A1} If $\pi_0$ is Lipschitz on compacts then by Assumption~\ref{aleph3}, \ref{As:A1} holds.\\
\noindent \ref{As:A3} Boundedness of $V$ in~\ref{As:A3} follows from the boundedness of $f$ and the fact that $\alpha$ is bounded away from 0.\\
\noindent \ref{As:A6} Assumption~\ref{aleph3}  implies that $(\pi_n)$ are uniformly
Lipschitz and hence sequentially precompact in the sup-norm topology (A6) by
the Arzela-Ascoli Theorem.\\  
\noindent \ref{As:A5}  $g=0$ and since $\alpha$ is bounded
away from 0, for any $\Pi$, $X^\Pi_t\rightarrow \partial$. Now
$V^{\pi_n}(\partial)=0$ and so, by bounded convergence, \ref{As:A5} holds:
$$
V^{\pi_n}(X^\Pi_{t\wedge \tau})\L g(X^\Pi_{\tau})1_{(\tau<\infty)}\qquad\text{as $t\to\infty$.}
$$
\noindent \ref{As:A2}   Similarly, \ref{As:A2} holds:
$$V^{\pi_{n+1}}(X^{n+1}_{t\wedge\tau})-V^{\pi_{n}}(X^{n+1}_{t\wedge\tau})\L 0\qquad\text{as $t\to\infty$.}
$$
\noindent \ref{As:A4} The statement in~\ref{As:A4} is trickier to establish. Note that we have~\ref{As:A1} and~\ref{As:A2}, by
Theorem~\ref{thm:1}, we know that $V^{\pi_n}(x)$ is a non-decreasing sequence. Moreover, since~\ref{As:A3}  holds,  $V^{\pi_n}\uparrow V^{lim}$.
Now take a subsequence $(n_k)_{k\in\N}$ such that $(\pi_{n_k},\pi_{n_k+1})\rightarrow (\pi^*,\tilde\pi)$ 
uniformly on compacts.  
Then the corresponding $\sigma$ etc.  must also converge. Denote the limits by $\sigma^*$, $\tilde \sigma$ etc.
Then, $\Vl\in \C^2_b$ (see the argument in~\cite{JMS_1}, based on coupling and the classical PDE theory from Friedman~\cite{Friedman}) and 
$$\lim_{k\to\infty }\nabla V^{\pi_{n_k}}=\lim_{k\to\infty }\nabla V^{\pi_{n_k+1}}= \nabla \Vl
\quad\text{ and }\quad \lim_{k\to\infty }HV^{\pi_{n_k}}=\lim_{k\to\infty }H V^{\pi_{n_k+1}}=H\Vl$$
uniformly on compacts and 
$\calL^{\tilde \pi}\Vl+f(\cdot, \tilde\pi(\cdot))=0$. 
Now, from the convergence of the
derivatives of 
$V^{\pi_{n_k}}$, we obtain 
$$\calL^{\pi_{n_k+1} }V^{\pi_{n_k}}+f(\cdot, \pi_{n_k+1}(\cdot))\rightarrow \calL^{\tilde \pi}\Vl+f(\cdot, \tilde\pi(\cdot))=0$$ 
uniformly on compacts.\\
\noindent \ref{As:A7} and~\ref{As:A8} follow from Friedman~\cite{Friedman}.
See~\cite{JMS_1} for details.
\end{proof}

\subsection{Finite horizon controlled diffusion.}
This is very similar to the previous example if we add the requirement that $g$ is Lipschitz and bounded.
The details can be found in~\cite{JMS_2}.

\begin{remark}
In both examples we need to prove that $V^{\pi_n}$ is continuous before we can apply the usual PDE arguments.
This crucial step is carried out in~\cite{JMS_1} and~\cite{JMS_2} respectively. 
\end{remark}

\section{Proofs}
\label{sec:Proofs}

\begin{lem}
\label{lem:lemma4}
Under Assumptions \ref{As:A1} and~\ref{As:A2}, it holds that  
$$\calL^{\pi_n}V^{\pi_n}(x)+f(x,\pi_n(x))=0 \qquad\text{ for all $x\in D$ and $n\in\N$.}
$$
\end{lem}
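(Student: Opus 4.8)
The plan is to exploit the fact that $\pi_n$ is itself a Markov policy and that $V^{\pi_n}\in\C$ (by Assumption \ref{As:A1}), so that the martingale characterisation in \eqref{gen} applies to $\phi=V^{\pi_n}$ along the controlled process $X^{\pi_n}$. Concretely, fix $n\in\N$ and $x\in D$. Since $V^{\pi_n}\in\C$, the process
$$
M_t := V^{\pi_n}(X^{\pi_n}_{t\wedge\tau}) - \int_0^{t\wedge\tau}\calL^{\pi_n(X^{\pi_n}_s)}V^{\pi_n}(X^{\pi_n}_s)\,ds
$$
is a martingale, where I write $\calL^{\pi_n(X^{\pi_n}_s)} = \calL^{\Pi_s}$ with $\Pi=\pi_n(X^{\pi_n})$. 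Taking expectations and using $X^{\pi_n}_0=x$ gives
$$
V^{\pi_n}(x) = \E\!\left[V^{\pi_n}(X^{\pi_n}_{t\wedge\tau})\right] - \E\!\left[\int_0^{t\wedge\tau}\calL^{\pi_n}V^{\pi_n}(X^{\pi_n}_s)\,ds\right].
$$
On the other hand, by the Markov/flow property the payoff $V^{\pi_n}$ satisfies the dynamic relation
$$
V^{\pi_n}(x) = \E\!\left[\int_0^{t\wedge\tau} f(X^{\pi_n}_s,\pi_n(X^{\pi_n}_s))\,ds + V^{\pi_n}(X^{\pi_n}_{t\wedge\tau})\right],
$$
which follows from the definition of $V^{\pi_n}$ in \eqref{eq:VPi}, the tower property of conditional expectation applied at time $t\wedge\tau$, and the uniqueness in law built into the definition of a Markov policy (parts \ref{enum:item_2_unique_in_law}--\ref{enum:item_3_unique_in_law}); the integrability needed to split the expectation is supplied by condition (iv) in the definition of $\A_x$ together with Assumption \ref{As:A2} (or directly by finiteness of the payoff). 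Here one should be a little careful that $V^{\pi_n}(X^{\pi_n}_{t\wedge\tau})$ is genuinely the conditional expected remaining reward; this is exactly the content of the strong Markov property of $X^{\pi_n}$ and the well-posedness assumptions, and is the one point where the weak-formulation bookkeeping has to be invoked rather than waved through.

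Subtracting the two displays for $V^{\pi_n}(x)$ yields
$$
\E\!\left[\int_0^{t\wedge\tau}\Bigl(\calL^{\pi_n}V^{\pi_n}(X^{\pi_n}_s) + f(X^{\pi_n}_s,\pi_n(X^{\pi_n}_s))\Bigr)ds\right] = 0
$$
for every $t>0$. Now set $\psi(y) := \calL^{\pi_n(y)}V^{\pi_n}(y) + f(y,\pi_n(y))$ for $y\in D$; note $\psi$ is continuous on $D$ because $(y,a)\mapsto\calL^a V^{\pi_n}(y)$ is jointly continuous (using $V^{\pi_n}\in\C$ and \ref{item_2:Setting}), $f$ is continuous, and $\pi_n$ is continuous by \ref{As:A1}. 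The defining property of an improvement (Definition \ref{def:Improvement}\ref{enum:item_1_imporovable}) applied with $\pi_n$ playing the role of $\pi'$ and $\pi_{n-1}$ the role of $\pi$ does not immediately give $\psi\equiv 0$; instead the cleanest route is to argue directly. Fix $x\in D$ and suppose for contradiction that $\psi(x)\neq 0$. If $\psi(x)>0$, by continuity there is a neighbourhood $U\ni x$ with $U\subset D$ on which $\psi>\delta>0$; dividing the integral identity by $t$ and letting $t\downarrow 0$, dominated convergence (the integrand is bounded near $t=0$ by joint continuity on a compact neighbourhood, and $\P(\tau>0)=1$ since $x\in D$ is interior and $X^{\pi_n}$ is right-continuous with $X^{\pi_n}_0=x$) gives $\tfrac1t\E[\int_0^{t\wedge\tau}\psi(X^{\pi_n}_s)\,ds]\to\psi(x)>0$, contradicting that the expectation is identically zero. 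The case $\psi(x)<0$ is symmetric. Hence $\psi\equiv 0$ on $D$, which is the claim.

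The main obstacle I anticipate is the first of the two representations of $V^{\pi_n}(x)$ — the dynamic-programming identity $V^{\pi_n}(x)=\E[\int_0^{t\wedge\tau}f\,ds + V^{\pi_n}(X^{\pi_n}_{t\wedge\tau})]$ — since in the weak formulation one must justify restarting the controlled process at the (random) state $X^{\pi_n}_{t\wedge\tau}$ and identifying its conditional law with that of $X^{\pi_n}$ started afresh, using only the strong Markov property of each $X^a$ in \ref{item_2:Setting}, the uniqueness-in-law clauses, and condition (iv) of $\A_x$ for the integrability. Everything after that — forming the martingale from \eqref{gen}, subtracting, and the local-time-averaging argument to pass from "integral is zero for all $t$" to "integrand is zero pointwise" — is routine and uses only continuity of $\psi$ on $D$, which Assumptions \ref{As:A1} and \ref{item_2:Setting} guarantee. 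An alternative to the $t\downarrow 0$ argument, if one prefers not to invoke right-continuity at $t=0$ delicately, is to note $t\mapsto\E[\int_0^{t\wedge\tau}\psi(X^{\pi_n}_s)ds]$ is identically zero hence has zero derivative, and under a uniform-integrability bound on $\psi(X^{\pi_n}_{\cdot})$ its right derivative at $0$ equals $\psi(x)$; either way continuity of $\psi$ is what makes it work.
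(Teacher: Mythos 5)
Your proposal is correct and follows essentially the same route as the paper: combine the generator martingale from \eqref{gen} applied to $V^{\pi_n}\in\C$ with the Markovian dynamic-programming identity for the payoff, subtract to see that $\int_0^{t\wedge\tau}\bigl(\calL^{\pi_n}V^{\pi_n}+f(\cdot,\pi_n(\cdot))\bigr)(X^{\pi_n}_s)\,ds$ vanishes, and conclude by continuity of the integrand together with right-continuity of $X^{\pi_n}$ at $t=0$. The only cosmetic difference is that the paper deduces the pathwise vanishing of the integral (a continuous finite-variation martingale null at zero) whereas you work with its expectation and a $t\downarrow 0$ differentiation argument; both are instances of the same ``usual Markovian argument.''
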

\begin{proof} We know that
$$
V^{\pi_n}(X^{\pi_n}_{t\wedge \tau})-\int_0^{t\wedge \tau} \calL^{\pi_n}V^{\pi_n}(X^{\pi_n}_s)ds
$$
is a martingale and the usual Markovian argument shows that therefore
$$
\int_0^{t\wedge \tau} (\calL^{\pi_n}V^{\pi_n}+f(\cdot,\pi_n(\cdot))(X^{\pi_n}_s)ds
=0.
$$
The result then follows from continuity of $\calL^{\pi_n}V^{\pi_n}+f(\cdot,\pi_n(\cdot))$ 
(see~\ref{item_2:Setting} in Section~\ref{subsec:Setting})
and the right continuity of $X^{\pi_n}$.
\end{proof}

\subsection{Proof of Theorem~\ref{thm:1}} 
\label{subsec:proof_thm1}
Take $\pi_0\in I^*$ and $x\in D$ 
and let
$(\pi_n)_{n\in\N}$
be the sequence of policies produced by the PIA.
For any
$n\in\N$ 
define
$$
S_t:=(V^{\pi_{n+1}}-V^{\pi_n})(X^{\pi_{n+1}}_{t\wedge \tau}),\qquad\text{$t\geq0$.}
$$
By assumption, both payoffs $V^{\pi_{n+1}}$ and $V^{\pi_n}$ are in $\C$. Hence the process
$$
V^{\pi_k}(X^{\pi_{n+1}}_{t\wedge \tau})-\int_0^{t\wedge \tau} \calL^{\pi_{n+1}}V^{\pi_k}(X^{\pi_{n+1}}_s)ds, \qquad\text{for $k=n$, $n+1$,}
$$
is a martingale. 
So,
$$
S_t=(V^{\pi_{n+1}}-V^{\pi_n})(x)+M_{t\wedge \tau}+\int_0^{t\wedge \tau} (\calL^{\pi_{n+1}}V^{\pi_{n+1}}-\calL^{\pi_{n+1}}V^{\pi_n})(X^{\pi_{n+1}}_s)ds,
$$
where $M$ is a martingale.
Thus
$$
S_t=(V^{\pi_{n+1}}-V^{\pi_n})(x)+M_{t\wedge \tau}-\int_0^{t\wedge \tau} \sup_{a\in A}[\calL^aV^{\pi_n}+f(\cdot,a)](X^{\pi_{n+1}}_s)ds,
$$
by Lemma~\ref{lem:lemma4} and the definition of $\pi_{n+1}$.
Appealing to Lemma~\ref{lem:lemma4} again, the integrand is non-negative and hence $S$ is a
supermartingale. Taking expectations and letting $t\rightarrow \infty$ we
obtain the result using~\ref{As:A2}.\hfill$\square$

\subsection{Proof of Theorem~\ref{thm:2}} 
\label{subsec:proof_thm2}
From Theorem~\ref{thm:1} and~\ref{As:A3}, $V^{\pi_n}(x)\uparrow \Vl(x)$ holds for any $x\in S$ as $n\to\infty$,
where the function $\Vl$ is finite and bounded above by $V$.
Fix $x\in D$ and $\Pi\in \A_x$ and take the subsequence 
$(\pi_{n_k})_{k\in \N}$
in~\ref{As:A4}.
Set
$$
S^k_t=
V^{\pi_{n_k}}(X^{\Pi}_{t\wedge \tau})+\int_0^{t\wedge \tau} f(X^{\Pi}_s, {\Pi}_s)ds.
$$
It follows that
there is a martingale $M^k$ such that
\begin{eqnarray*}
S^k_t&=& S^k_0+M^k_{t\wedge \tau}+\int_0^{t\wedge \tau}[\calL^{\Pi_s}V^{\pi_{n_k}}+f(\cdot,\Pi_s)](X^{\Pi}_s)ds\\
&\leq & 
S^k_0+M^k_{t\wedge \tau}+\int_0^{t\wedge \tau}\sup_{a\in A}[\calL^aV^{\pi_{n_k}}+f(\cdot,a)](X^{\Pi}_s)ds\\
&= &S^k_0+M^k_{t\wedge \tau}+\int_0^{t\wedge \tau}[\calL^{\pi_{n_k+1}}V^{\pi_{n_k}}+f(\cdot,{\pi_{n_k+1}}(\cdot))](X^{\Pi}_s)ds
\end{eqnarray*}
So
\begin{equation}
\label{eq:supermart}
\E S^k_t\leq 
V^{\pi_{n_k}}(x) +\E\int_0^{t\wedge \tau}[\calL^{\pi_{n_k+1}}V^{\pi_{n_k}}+f(\cdot,{\pi_{n_k+1}}(\cdot))](X^{\Pi}_s)ds.
\end{equation}

Letting $k\rightarrow \infty$ in~\eqref{eq:supermart} we obtain, by~\ref{As:A4}, together with dominated convergence,  
and monotone convergence, that
\begin{equation}
\label{eq:Lower_bound_V_lim}
\Vl(x)\geq \E[\Vl(X^{\Pi}_{t\wedge \tau})+\int_0^{t\wedge \tau} f(X^{\Pi}_s, \Pi_s)ds]
\qquad\text{for all $t\geq0$.}
\end{equation}
Now~\ref{As:A5} 
and Fatou's lemma
(recall $\Vl\geq V^{\pi_{n_k}}\geq0$ for any index $k$) 
imply
$$
\liminf_{t\to\infty}\E\Vl(X^{\Pi}_{t\wedge \tau})
\geq 
\liminf_{t\to\infty}\E V^{\pi_{n_k}}(X^{\Pi}_{t\wedge \tau})
\geq 
\E\liminf_{t\to\infty} V^{\pi_{n_k}}(X^{\Pi}_{t\wedge \tau})
=
\E g(X^\Pi_{\tau})1_{(\tau<\infty)},
$$
and so~\eqref{eq:Lower_bound_V_lim} yields $\Vl(x)\geq V^\Pi(x)$ for each $\Pi\in\A_x$. Hence $\Vl\geq V$ on the domain 
$D$ (on the complement we clearly have $\Vl= V$). However, 
since by definition 
$V^{\pi_n}\leq \Vl$ 
on
$S$
for all $n\in\N$,
$\Vl=\lim_{n\to\infty}V^{\pi_n}\leq V$ on $S$ so in fact we have equality.\hfill$\square$

\subsection{Proof of Theorem~\ref{thm:3}} 
\label{subsec:proof_thm3}
Let 
$(\pi_{n_j})_{j\in\N}$ 
be a subsequence, guaranteed by~\ref{As:A6}, of the sequence of Markov policies 
$(\pi_{n})_{n\in\N}$ 
in
$I^*$
produced by the PIA.
Put differently, the limit 
$$\lim_{j\to\infty}\pi_{n_j}= \pi^*$$
holds uniformly on compacts in $S$
for some 
Markov policy 
$\pi^*$.
Hence, for any 
$x\in D$, 
there exists (by the definition of a Markov policy) a controlled process
$X^{\pi^*}$
defined on some filtered probability space. 

Fix $x\in D$, define the process $S^j=(S^j_t)_{t\geq0}$,
\begin{equation}
\label{eq:def_Sj}
S^j_t:=
V^{\pi_{n_j}}(X^{\pi^*}_{t\wedge \tau})+\int_0^{t\wedge \tau} f(X^{\pi^*}_s, {\pi^*}(X^{\pi^*}_s))ds,
\end{equation}
and note that the following equality holds 
\begin{equation}\label{pol}
S^j_t=V^{\pi_{n_j}}(x)+M_t +\int_0^{t\wedge \tau} \left[\calL^{\pi^*}V^{\pi_{n_j}}+f(\cdot,\pi^*(\cdot))\right](X^{\pi^*}_s)ds
\qquad \text{for any $t\geq0$,}
\end{equation}
where the martingale $M=(M_t)_{t\geq0}$
is given by
$$ M_t:=
V^{\pi_{n_j}}(X^{\pi^*}_{t\wedge \tau})-V^{\pi_{n_j}}(x)-\int_0^{t\wedge \tau} \calL^{\pi^*}V^{\pi_{n_j}}(X^{\pi^*}_s))ds.
$$
By Lemma~\ref{lem:lemma4} and the representation in~\eqref{pol}
we obtain
$$
S^j_t=V^{\pi_{n_j}}(x)+M_t +\int_0^{t\wedge \tau} [(\calL^{\pi^*}-\calL^{\pi_{n_j}})V^{\pi_{n_j}}+f(\cdot,\pi^*(\cdot))-f(\cdot,\pi_{n_j}(\cdot))](X^{\pi^*}_s)ds.
$$
Take expectations on both sides of this identity. 
By localising, applying~\ref{As:A7} and using Theorem~\ref{thm:2} we obtain
\begin{equation}\label{first_lim}
V(x) = \lim_{j\to\infty} \E[S_t^j]\qquad \text{for any $t\geq0$.}
\end{equation}

Definition~\eqref{eq:def_Sj} and Theorem~\ref{thm:2} imply a.s. monotone convergence 
$$S^j_t\nearrow 
V(X^{\pi^*}_{t\wedge \tau})+\int_0^{t\wedge \tau} f(X^{\pi^*}_s, {\pi^*}(X^{\pi^*}_s))ds
\qquad\text{as $j\to\infty$ for any $t\geq0$.}
$$
Hence by the monotone convergence theorem and~\eqref{first_lim} we find
$$
V(x) = \E[\lim_{j\to\infty} S_t^j] = \E\left[V(X^{\pi^*}_{t\wedge \tau})+\int_0^{t\wedge \tau} f(X^{\pi^*}_s, {\pi^*}(X^{\pi^*}_s))ds\right]
\qquad \text{for any $t\geq0$.}
$$
Letting $t\rightarrow \infty$, applying~\ref{As:A8} and recalling the definition 
of $V^{\pi^*}$
yields the result that $V^{\pi^*}=V$. \hfill $\square$

\end{document}